\numberwithin{equation}{section}
\newtheorem*{theorem*}{Theorem}
\newtheorem*{corollary*}{Corollary}
\newtheorem{thm}{Theorem}[section]
\newtheorem{prop}[thm]{Proposition}
\newtheorem{lem}[thm]{Lemma}
\newtheorem{cor}[thm]{Corollary}
\theoremstyle{definition}
\newcommand{\pa}{\partial}
\newcommand{\Om}{\Omega}
\renewcommand{\Re}{\operatorname{Re}}
\newcommand{\h}{\mathcal{H}}
\newcommand{\D}{\mathbb{D}}
\newcommand{\C}{\mathbb{C}} 
\newcommand{\R}{\mathbb{R}}
\newcommand{\N}{\mathbb{N}}
\begin{document}
\title{Boundary behaviour of the Span metric and its higher-order curvatures}

\author{Amar Deep Sarkar}

\address{ADS: Harish-Chandra Research Institute, Allahabad (Prayagraj) 211019, India}
\email{amardeepsarkar@hri.res.in}

\keywords{Span metric, higher-order curvatures, reduced Bergman kernel, Dirichlet space, reproducing kernel}

\subjclass{32A07, 30F45, 32A25}	

\thanks{The author was supported by the postdoctoral fellowship of Harish-Chandra Research Institute, Allahabad.}	
	
%
%
%
%
%

\begin{abstract}
In this note, we use scaling principle to study the boundary behaviour of the span metric and its higher-order curvatures on finitely connected Jordan planar domains. A localization of this metric near boundary points of finitely connected Jordan domains is also obtained. Further, we obtain boundary sharp estimates for this metric on $ C^2 $-smooth bounded domains and consequently, this metric is comparable to the Carath\'eodory and the Kobayashi metrics on these domains.
\end{abstract}

\maketitle

\section{Introduction}
\noindent
Let $ D $ be a finitely connected Jordan domain in the complex plane $ \C $ -- by this, we mean that the boundary of $ D $ is bounded by finitely many Jordan curves. The goal of this article is to use the scaling principle to understand the boundary behaviour
of the span metric and its higher-order curvatures on the planar domain $ D $.
\medskip 

Now, we shall define the span metric.
To define the span metric, first, let us define the Dirichlet space of the domain $ D $ at a fixed point $ \zeta \in D $. Let $ \mathcal{O}(D) $ be the set of all holomorphic functions on $ D $.

\medskip

\medskip

The Dirichlet space of the domain $ D $ at a fixed point $ \zeta \in D $ is defined by
\[
\mathcal{D}_{\zeta}(D):= \left\{f \in \mathcal{O}(D): f(\zeta) = 0 \,\, \text{and} \,\,  \int_{D}|f^{\prime}(z)|^2dA(z) < \infty \right\}.
\]
This space is a Hilbert space with respect to the inner product
\[
\mathcal{D}(f, g) := \int_{D}f^{\prime}(z) \overline{g^{\prime}(z)}dA(z). 
\]
The evaluation functional 
\[
f \to f^{\prime}(\zeta)
\]
is a bounded linear functional on $ \mathcal{D}_{\zeta}(D) $.
Using the reproducing kernel Hilbert space property of the Dirichlet space, we have
\[
f^{\prime}(\zeta) = \int_{D}f^{\prime}(z) \overline{M^{\prime}(z, \zeta)}dA(z), 
\]
where $ M(z, \zeta) $ is the reproducing kernel, and its derivative, i.e., $ \tilde{K}_D(z, \zeta) := M^{\prime}(z, \zeta) $, is called the reduced Bergman kernel. Here we use the subscript to emphasize dependence on the domain  $ D $.
\medskip

The kernel $ \tilde{K}_D(z, \zeta) $ is holomorphic in the first variable and anti-holomorphic in the second variable, and 
\[ 
\tilde{K}_D(z, \zeta) = \int_D \tilde{K}_D(\xi, \zeta) \overline{ \tilde{K}_D(\xi, z)}dA(\xi).
\]

For a bounded domain $ D \subset \C $, $ \tilde{K}_D(\zeta, \zeta) > 0 $.
\medskip

Now, consider the extremal problem,
\[
s_D(\zeta) := \sup\left\{ | f^{\prime}(\zeta) |^2 :  f \in \mathcal{D}(D) \,\, \text{and}\,\, \mathcal{D}(f)\leq \pi \right\},
\]
where $\mathcal{D}(f) := \int_D |f^{\prime}(z)|^2dA(z) $. 
\medskip

The metric $ s_D(z) |dz|^2 $ is called the {\em span metric}. It is a conformal metric, i.e.,
if $ \phi: D_1 \longrightarrow D_2 $ a biholomorphic map between the planar domains $ D_1 $ and $ D_2 $, then
\[
\phi^{*}(s_{D_2})(z) := s_{D_2}(\phi(z))|\phi^{\prime}(z)|^2 = s_{D_1}(z)
\] 
for all $ z \in D_1 $. It is known that, (see \cite[Example 6]{UnifiedMetricSugawa}), that the span metric and the reduced Bergman kernel are related by the following formula:
\[
s_D(z) = \pi \tilde{K}_D(z, z),
\] 
and this guarantees that the span metric is real analytic -- since reduced Bergman kernel is real analytic along the diagonal. 
\medskip

In \cite{AhlforsAndBeurling}, Ahlfors and Beurling study span metrics of domains in the complex plane as an extremal problem of a collection of holomorphic functions with finite Dirichlet integrals, and deduced a few results regarding this metric. They also compare this metric with the Carath\'eodory metric and show that the span metric is smaller than the Carath\'eodory metric. This result, in general, is true for Riemann surfaces, and it was shown by Sakai in \cite{SakaiSpanMetricCurvatureBound}. This metric is also used in the classification problem of Riemann surfaces, see \cite{MakatoActaAnlyticDirichlet}, \cite{SarioClassificationofRiemannSurfaces}. A potential theoretic perspective has also been observed for this metric by Schiffer and Garabedian in \cite[Section 2]{PotentialPerspectiveShiffereGarabedian}.
\medskip

When $ D = \D $, the unit disk, the span metric is equal to the Poincar\'e metric, i.e., $ s_{\D}(z) = 1/(1 - |z|^2)^2 $ for all $ z \in \D $. Therefore, the Gaussian curvature, denoted by $ \kappa(z: s_D) $, of the span metric $ s_D(z)|dz|^2 $ of any simply connected domain $ D $ is equal to $ -4 $. Zarankiewicz in \cite{Zarankiewicz} shows that $ \kappa(z: s_D) < - 4 $ when $ D $ is an annulus  (also see \cite{SpanMetricTriplyConnectedDomains} for triply-connected domains). In \cite{StefanBergmanSpanMetricCurvatureBoundary}, Bergman proves the following: for a boundary point $ p \in \partial D $, $ D \subset \C $ a domain, such that there exist two circles touching at $ p $, one in the interior of $ D $ and the other at the exterior of $ D $,  the Gaussian curvature $ \kappa(z: s_D) $ of the span metric converges to $ -4 $ as $ z \to p $ so that $ z - p $ and the interior normal at $ p $ makes an angle in absolute value less than $ \pi/2 $. Further, Burbea shows that the Gaussian curvature of the span metric is bounded above by $ -2 $, see \cite{BurbeaCurvatureLessThanMinusTwo}. \medskip

In \cite{SuitaConjecture}, Suita conjectured that $ \kappa(z: s_{\Omega}) \leq - 4 $ for every
Riemann surface $ \Omega $ exhibiting holomorphic functions with finite Dirichlet integrals. This conjecture was settled by Sakai, see \cite{SakaiSpanMetricCurvatureBound}, which is an application of sub-mean value property of subharmonic functions. 
\medskip

In \cite{BurbeaSpanMetricHigherOrderCurvature}, Burbea defines the higher-order curvatures of the span metric -- the Gaussian curvature comes as a special case of this -- and gives upper bounds on the higher-order curvatures of the span metric. 
\medskip

To state Burbea's result, first, we shall define the higher-order curvatures, in particular $ n $-th order curvature of the span metric for every $ n \in \N $. It is defined as follows:
\[
\kappa_n(z) = \kappa_n(z: s_D) := -(n + 1)! s_D^{-(n + 1)(n + 2)/2}\det[s_{j \bar k}]_{j, k = 0}^{n},
\]
where
\[
s_{j \bar k}(z) = \frac{\partial^{j + k}s_D}{\partial z^j \partial\overline{z}^{k}}(z).
\]	
Note that $ k_1 = -\Delta \log s_D/2s_D  $ is the standard Gaussian curvature. Also note that the $ n $-th order curvature of the span metric is a conformal invariant.
\medskip

\noindent
{\bf Result}~(Burbea, \cite{BurbeaSpanMetricHigherOrderCurvature}){\bf.}
{\em Let $ D \subset \C $ be a domain such that $ s_D> 0 $ on $ D $ . Then the $ n $-th order curvatures of the span metric $ s_D(z)|dz|^2 $ satisfy
	\[
	\kappa_n(z) \leq -
	\left(\prod_{k = 1}^{n + 1}k!\right)^2,
	\]
	and equality holds if and only if $ D $ is a simply connected domain minus a null set with respect to the metric $ s_D(z)|dz|^2 $.}
\medskip

We have the following results:
\begin{thm}\label{T:MainThm1}
	Let $ D \subset \C $ be a finitely connected bounded domain bounded by Jordan curves, and $ p \in \partial D $ a $ C^2 $-smooth boundary point. Then
	\[
	s_D(z)  \approx 1/{\rm dist} (z, \partial D)
	\]
	for $z$ close to $p$.
\end{thm}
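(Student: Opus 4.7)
My plan is to obtain the two-sided estimate by sandwiching $D$ between an inner and an outer osculating disk at $p$ and invoking the domain monotonicity of the span metric. Since $\partial D$ is $C^2$-smooth at $p$, there are radii $r_i,r_e>0$ and centres $q_i,q_e\in\C$ with $B_i:=B(q_i,r_i)\subset D$ and $B_e:=B(q_e,r_e)\subset\C\setminus\overline{D}$, both internally tangent to $\partial D$ at $p$. For $z$ sufficiently close to $p$, the three distances $\operatorname{dist}(z,\partial B_i)$, $\operatorname{dist}(z,\partial D)$, and $\operatorname{dist}(z,\partial B_e)$ are mutually comparable, with constants depending only on $r_i$ and $r_e$.

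The key general input is the \emph{monotonicity} of the span metric: if $D_1\subset D_2$ and $\zeta\in D_1$, then $s_{D_1}(\zeta)\geq s_{D_2}(\zeta)$. This is immediate from the extremal definition, since any admissible $f\in\mathcal{D}_\zeta(D_2)$ restricts to an element of $\mathcal{D}_\zeta(D_1)$ with the same value $f'(\zeta)$ and Dirichlet integral no larger. Applied to $B_i\subset D$ and $D\subset \Omega:=\C\setminus\overline{B_e}$ one gets
\[
s_\Omega(z)\ \leq\ s_D(z)\ \leq\ s_{B_i}(z).
\]

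For the upper bound, $B_i$ is simply connected, so $f\mapsto f'$ identifies $\mathcal{D}_\zeta(B_i)$ isometrically with the Bergman space of $B_i$, and $s_{B_i}$ is (a multiple of) the Bergman/Poincar\'e metric of the disk. The resulting explicit formula $s_{B_i}(z)=r_i^2/(r_i^2-|z-q_i|^2)^2$ has precisely the claimed boundary behaviour near $\partial B_i\ni p$. For the lower bound, although $\Omega$ is unbounded, the inversion $\phi(w)=1/(w-q_e)$ maps $\Omega\cup\{\infty\}$ biholomorphically onto $B(0,1/r_e)$; the puncture $\phi(\infty)=0$ is removable for any $f$ in the Dirichlet space of $B(0,1/r_e)\setminus\{0\}$, because $\int|f'|^2<\infty$ kills the negative Laurent coefficients and rules out a logarithmic part. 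Conformal invariance of the span metric then yields $s_\Omega(z)=r_e^2/(|z-q_e|^2-r_e^2)^2$, supplying the matching lower bound near $p$.

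The step I expect to require the most care is the lower bound: justifying that $\mathcal{D}_\zeta(\Omega)$ pulls back via $\phi$ to the Dirichlet space on the punctured disk and then extends to the full disk, and verifying that the resulting conformal formula really does dominate $s_D$. The upper bound reduces to the Poincar\'e metric of the inner osculating disk and is essentially immediate. Once both sides are in place, combining them gives the stated comparability near $p$, with constants depending only on the $C^2$-geometry of $\partial D$ at $p$ through the radii $r_i$ and $r_e$.
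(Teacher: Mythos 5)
Your route is genuinely different from the paper's: the paper rescales $D$ at $p$ by the affine maps $T_j$, proves locally uniform convergence of the reduced Bergman kernels of the scaled domains to that of a half-plane, and reads the estimate off from the pull-back identity together with $s_{D_j}(0)\to s_{\mathcal H}(0)$. Within your approach, the two main ingredients are sound: monotonicity of $s$ under inclusion does go the way you state, and the identification of the Dirichlet space of $\Omega=\C\setminus\overline{B_e}$ with that of a disk via the inversion (the puncture at $\phi(\infty)$ being removable because no negative Laurent coefficient of the derivative is area-square-integrable) is correct and yields $s_{\Omega}(z)=r_e^2/(|z-q_e|^2-r_e^2)^2$.

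The gap is the sentence claiming that $\operatorname{dist}(z,\partial B_i)$, $\operatorname{dist}(z,\partial D)$ and $\operatorname{dist}(z,\partial B_e)$ are mutually comparable for all $z$ near $p$. With both disks tangent at the single point $p$ this fails for tangential approach: writing $z=p+\epsilon\tau+\delta\nu$ ($\tau$ tangent, $\nu$ the inner normal), one has $\operatorname{dist}(z,\partial B_e)\asymp \delta+\epsilon^2/r_e$, which is not $O(\delta)$ when $\epsilon^2\gg\delta$, and such points eventually leave $B_i$ altogether (membership in $B_i$ forces $\epsilon^2<2r_i\delta$), so the upper bound $s_D(z)\le s_{B_i}(z)$ is not even available there. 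As written, your sandwich proves the estimate only in a nontangential Stolz-type region at $p$ --- essentially the restriction in Bergman's classical result that the paper's scaling argument is designed to remove. The standard repair: since $\partial D$ is $C^2$ in a whole neighbourhood of $p$, there is a uniform $r>0$ such that every boundary point $q$ near $p$ admits interior and exterior tangent disks of radius $r$ at $q$; run your sandwich with the disks tangent at the boundary point nearest to $z$, so that all three distances coincide exactly, and the two-sided bound follows for every $z$ near $p$. Two further remarks. First, your explicit formulas give $s_D(z)\approx 1/\operatorname{dist}(z,\partial D)^2$, which is the correct exponent for the quadratic normalization $s_{\D}(z)=1/(1-|z|^2)^2$ used in the paper; the first power in the theorem's statement (and the single factor of $|\psi(p_j)|$ in the paper's pull-back step) appear to be slips. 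Second, even when repaired, your method yields only two-sided bounds with explicit constants, whereas the scaling argument shows that $s_D(z)\cdot|\psi(z)|$ actually converges as $z\to p$, a stronger fact that the paper reuses for the curvature theorem.
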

Here and in what follows, we use the standard convention that $ A \approx B $ means that both $ A/B,\,\, B/A $ are bounded above by some constant $ C > 1 $. The theorem above shows that the span metric is uniformly comparable to the quasi-hyperbolic metric near a $ C^2 $-smooth boundary point, consequently, when $ D $ smoothly bounded, the span metric is uniformly comparable to the quasi-hyperbolic metric everywhere on $ D $ (and hence to the Kobayashi and the Carath\'eodory metrics). In fact, a stronger version of the theorem above will be proved: we show that the ratio of the span metric and the quasi-hyperbolic metric converges to a strictly non-zero quantity near a $ C^2 $-smooth boundary point.\medskip


We strengthen the result by Burbea as follows.
\begin{thm}\label{T:MainThm2}
	Let $ D \subset \C $ be a finitely connected bounded domain bounded by Jordan curves, and $ p \in \partial D $ a $ C^2 $-smooth boundary point. Then
	\[
	\kappa_n(z) \to  -\left(\prod_{k = 1}^{n + 1}k!\right)^2
	\]
	as $ z \to p $.
\end{thm}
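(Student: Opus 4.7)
The overall strategy is to combine the conformal invariance of $\kappa_n$ with a scaling argument at the boundary point $p$, reducing the problem to the upper half-plane, where Burbea's equality case supplies the claimed value immediately.

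Pick a sequence $z_j \to p$ in $D$ and set $\delta_j := \operatorname{dist}(z_j, \partial D)$. Let $\pi_j \in \partial D$ be the (for $j$ large, unique) nearest boundary point, and let $R_j$ be the rotation sending the inward unit normal at $\pi_j$ to the positive imaginary direction. Define affine maps $T_j(w) := \pi_j + \delta_j R_j(w)$ and set $D_j := T_j^{-1}(D)$; then $w_j := T_j^{-1}(z_j) \to i$. The $C^2$-smoothness of $\partial D$ at $p$ implies that locally near $p$ the boundary is the graph of a function $\phi$ with $\phi(0) = \phi'(0) = 0$. Rescaling by $\delta_j$ replaces $\phi$ by $x \mapsto \delta_j^{-1}\phi(\delta_j x) = O(\delta_j)$ uniformly on compact $x$-intervals, while the remaining boundary components of $D$ recede to infinity. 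Hence $D_j$ converges, in the Carath\'eodory kernel sense with base point $i$, to the upper half-plane $\mathbb{H} := \{\operatorname{Im} w > 0\}$.

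By conformal invariance of $\kappa_n$, one has $\kappa_n(z_j : s_D) = \kappa_n(w_j : s_{D_j})$. The $n$-th order curvature is a rational expression in the mixed partials $s_{j \bar k}$ of the span metric at the base point, and $s_{D_j}(z) = \pi \tilde K_{D_j}(z, z)$ with $\tilde K_{D_j}(z, w)$ holomorphic in $z$ and antiholomorphic in $w$. Consequently, the desired limit $\kappa_n(w_j : s_{D_j}) \to \kappa_n(i : s_{\mathbb{H}})$ will follow, via the Cauchy integral formula applied on a small bidisk around $(i, i)$, from the local uniform convergence
\[
\tilde K_{D_j}(z, w) \longrightarrow \tilde K_{\mathbb{H}}(z, w)
\]
on a neighbourhood of $(i, i)$. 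Granted this, the simple connectedness of $\mathbb{H}$ together with Burbea's equality case give $\kappa_n(i : s_{\mathbb{H}}) = -\bigl(\prod_{k=1}^{n+1} k!\bigr)^2$, completing the proof.

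The main obstacle is the local uniform convergence of the reduced Bergman kernels. I would establish it by a standard Carath\'eodory kernel argument. The extremal characterization of $\tilde K_{D_j}(\cdot, \zeta)$ as reproducing kernel in $\mathcal{D}_\zeta(D_j)$, combined with the uniform upper bound on $s_{D_j}(i)$ furnished by Theorem \ref{T:MainThm1}, gives uniform $L^2$-derivative control on the kernels over any fixed compact subdomain of $\mathbb{H}$; one can therefore extract weak subsequential limits. Using the $C^2$-flattening estimate $\delta_j^{-1}\phi(\delta_j x) = O(\delta_j)$ to construct, via a boundary-fitted cutoff, competitors on $D_j$ obtained by truncating extremals on $\mathbb{H}$ (and vice versa) with Dirichlet defect $o(1)$, one then identifies every such weak limit with $\tilde K_{\mathbb{H}}(\cdot, i)$ through the reproducing property. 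Holomorphy in the first variable and antiholomorphy in the second upgrade weak convergence on compacta to locally uniform convergence of the kernel together with all its mixed partial derivatives, closing the argument.
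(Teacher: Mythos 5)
Your overall architecture --- scale at $p$ by affine maps, invoke conformal invariance of $\kappa_n$, reduce everything to locally uniform convergence of the reduced Bergman kernels of the scaled domains to that of a half-plane, differentiate via Cauchy estimates on a bidisk, and finish with Burbea's equality case for the simply connected limit domain --- is exactly the paper's. The divergence, and the problem, lies in how you establish the kernel convergence $\tilde K_{D_j} \to \tilde K_{\mathbb H}$. Your identification of the weak subsequential limits rests on producing, from an extremal on $\mathbb H$, a competitor on $D_j$ (and vice versa) by a ``boundary-fitted cutoff'' with Dirichlet defect $o(1)$. As stated this step fails: multiplying a holomorphic function by a cutoff destroys holomorphy, so the truncation is not an admissible element of $\mathcal D_\zeta(D_j)$, and you supply no correction mechanism (a $\bar\partial$-correction with $L^2$ control of the derivative, a conformal transplantation, or the like). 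The difficulty is compounded by the multiple connectivity: $D_j$ still has $l-1$ holes (receding to infinity under the scaling), so an extremal on $D_j$ is not defined on all of $\mathbb H$, and cutting it off near the holes requires control of the function itself (not merely its derivative) there, which the uniform Dirichlet bound does not provide; possible periods around the holes are also not addressed. Note too that $D_j \not\subset \mathbb H$ in general for a $C^2$ boundary, so even plain restriction of an $\mathbb H$-extremal to $D_j$ is unavailable.

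The paper circumvents precisely this point with two softer ingredients. First, a localization proposition: after mapping $D$ onto a domain $H \subset \mathbb H$ with outer boundary $\mathbb R$, monotonicity of the span metric under inclusion squeezes $s_D$ between $s_{\mathbb H}$ and $s_{U \cap D}$, and an explicit computation of the hyperbolic metric of a half-disk shows the ratio tends to $1$ at the boundary point; this is what disposes of the holes. Second, the Carath\'eodory kernel convergence theorem applied to the simply connected localized pieces yields $s_{D_j}(z) \to s_{\mathcal H}(z)$ pointwise on the diagonal, and a normal-families-plus-power-series (polarization) argument upgrades this diagonal convergence to locally uniform convergence of $\tilde K_j(\zeta, z)$ off the diagonal. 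To salvage your variational scheme you would have to supply the $\bar\partial$-type correction with quantitative Dirichlet estimates and handle the receding holes, or else replace that step by a monotonicity/localization argument of the above kind. A minor further caveat: the uniform bound on $s_{D_j}$ near $i$ follows directly from monotonicity against a disk contained in $D_j$; invoking Theorem \ref{T:MainThm1} is unnecessary and, in the paper's logical order, that theorem is itself deduced from the kernel convergence you are trying to prove.
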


\medskip

\section{Scaling of planar domains}\label{S:Scaling}
\noindent
The scaling principle is a useful tool to study problems in several complex variables. For a general philosophy and an extensive exposition of the scaling principle, and various problems solved using it on several complex variables, we refer the reader to \cite{ScalingInHigherDimensionKrantzKimGreen}. It also can be used to study the boundary behaviour of conformal metrics and invariants in one complex variable, see \cite{DigantaPravavKushal}, which studies the boundary behaviour of the Green's functions and the capacity metrics on planar domains. Recently, in \cite{BoundaryBehConfAmarVerma}, the scaling method has been used to study the boundary behaviour of the Carath\'eodory metric, Aumann-Caratheodory rigidity constant, and a conformal metric arising from integrable holomorphic quadratic differentials (\cite{SugawaMetric}). A simplified version, which will suffice here, can be described as follows: Let $D \subset \mathbb C$ be a domain and $p \in \partial D$ a $C^2$-smooth boundary point. Let $U$ be a neighbourhood of $ p $ and $ \psi \in C^2(U)$ be such that $U \cap D = \{\psi < 0\}, U \cap \partial D = \{\psi = 0\}$ 
with $d \psi \neq 0$ on $U \cap \partial D$. Let $ p_n $ be a sequence in $ D $ converging to $p$. The affine maps
\begin{equation}\label{Eq:ScalingMap}
T_n(z) = \frac{z - p_n}{-\psi(p_n)}
\end{equation}
satisfy $ T_n(p_n) = 0 $ for all $n \ge 1$ and since $ \psi(p_n) \to 0$, it follows that the $T_n$'s expand a fixed neighbourhood of $ p $, i.e., for a compact set $ K $ in $ \C $, $ K \subset T_n(U) $ for all large $n$. The defining functions of the domains $ D_n' = T_n(U \cap D) $ are given by 
\[
\psi_n(z) = \frac{1}{-\psi(p_n)}\psi \circ T_n^{-1}(z)
\]
and it can be seen that the $\psi_n$'s converge to 
\[
\psi_{\infty}(z) = -1 + 2 \Re\left( \frac{\partial \psi}{\partial z}(p)z \right) 
\]
uniformly on compact subsets of $\C$ as $ n \to \infty $. At this stage, let us recall the Hausdorff metric on subsets of a metric space.
\medskip 

Given a set $ S \subseteq \C$, let $ S_{\epsilon} $ denote the $ \epsilon $-neighbourhood of $ S $ with respect to the standard Euclidean distance on $ \C $. The Hausdorff distance between compact sets $ X, Y \subset \C $ is given by 
\[
d_H(X,Y) = \inf\{\epsilon > 0 : X \subset Y_{\epsilon}\,\, \text{and} \,\, Y \subset X_{\epsilon}\}. 
\]
It is known that this is a complete metric space on the space of compact subsets of $ \C $. To deal with non-compact but closed sets there is a topology arising from a family of local Hausdorff semi-norms. It is defined as follows: fix an $ R > 0 $ and for a closed set $ A \subset \C $, let $ A^R = A \cap \overline B(0, R)$ where $ B(0, R) $ is the ball centred at the origin with radius $ R $. Then, for $ A, B \subset \C$, set
\[
d_H^{(R)}(A, B) = d_H\left(A^R, B^R\right).
\]
We say that a sequence of closed sets $ A_n $ converges to a closed set $ A $ if there exists $ R_0 $ such that 
\begin{equation}\label{E:HaudorffCloseConvergence}
\lim_{n \to \infty}d_H\left(A_n^R, A^R\right) = 0
\end{equation}
for all $ R \geq R_0 $. Furthermore, we say a sequence of open sets $ \Omega_j \subset \C $ converges to an open set $ \Omega \subset \C $ in the Hausdorff sense if the sequence of their closures $ \overline{\Omega}_j $ converges to $ \overline{\Omega} $ and the sequence of their boundaries $\pa \Om_j$ converges to the boundary $\pa \Om$ as in (\ref{E:HaudorffCloseConvergence}). More details on the Hausdorff metric on compact sets can be found, for example, in \cite{FalconerHausdorffMetric}, while the variant for open bounded sets is discussed in \cite{BoasHausdorff}. The scaling method produces a limit domain that is always unbounded, and to take care of this, it is useful to see what happens in fixed but arbitrarily large closed balls.

\medskip 

Since $ \psi_j \to \psi_{\infty} $ uniformly on every compact subset in $ \C $, it follows that the sequence $ D_j' = T_j(U \cap D)$  converges to  the half-space 
\[
\h = \{z : -1 +  2 \Re \left( \frac{\partial \psi}{\partial z}(p)z \right) < 0 \} = \{z: \Re (\overline{\omega}z-1)<0\},
\]
where $\omega = (\partial\psi/\partial x)(p) + i(\partial\psi/\partial y)(p)$, in the Hausdorff sense as described above. As a consequence, every compact $ K \subset \h $ is eventually contained in $ D_j' $ for all $j$  large. Similarly, every compact $K$ that does not intersect $\overline{\h} $ eventually has no intersection with $ \overline{D}_j' $.  
\medskip

It can be seen that the same property holds for the domains $ D_j = T_j(D) $, i.e., they converge to the half-space $ \h $ in the Hausdorff sense. We also note that, as a consequence of Hausdorff convergence, the sequence of scaled domains $ D_j $ converges to the half-space $ \h $ in the Carath\'eodory kernel sense.
\medskip

\section{Localization of the span metric}

Let $ D $ be an $ l $-connected domain bounded by Jordan curves, $ l \in \N $, and $ p \in \partial D $ a $ C^2 $-smooth boundary point. For localization purpose, we may assume, without loss of generality,  that the point $ p $ is in the outer boundary curve which is also the boundary of the unbounded component of $ D^c $. Otherwise, we can transform the domain in the following way and work with the transformed domain: 
assume that $ p $ lies in an inner boundary curve. Let $ a $ be a point in the domain bounded by the curve which contains $ p $, which is a connected component of $ D^c $. The inversion $ T(z) := 1\big/ (z- a) $ transform the domain $ D $  into $ T(D) $, and the boundary point $ T(p) $ is in the outer boundary curve of $ T(D) $. 
\medskip

Since $ D $ is an  $ l $-connected domain, by filling the interior holes of this domain, we can make it into a simply connected domain, i.e., if we take $ D' := \cup_{q = 1}^{l - 1}K_q \cup D $ --- where $ K_q $ are the bounded components of $ D^c $ --- then $ D' $ is a simply connected domain bounded by a Jordan curve. Note that $ p $ lies in the boundary of $ D' $.
\medskip

By the Riemann mapping theorem, there exists a biholomorphic map $ f : D' \longrightarrow \D $, where $ \D $ is the unit disk centred at $ 0 $, and by the Carath\'eodory theorem, this map can be extended such that the extended map is a homeomorphism from  $ \overline {D'} $ onto  to $ \overline \D $. If necessary by rotating, we may assume that $ f(p) = 1 $.
\medskip

By using the fact that the map
\[
g(z) := i\left(\frac{1 - z}{1 + z}\right)
\]
gives a biholomorphism between the unit disk $ \D $ and the upper half plane $  \mathbb{H} $ sending the boundary point $ 1 $ to $ 0 $, we obtain a biholomorphism between $ D' $ onto $ \mathbb{H} $ by composing $ g $ and $ f $, and the extended composed map sends the boundary $ p $ to the boundary point $ 0 $ of the upper half plane. 
\medskip

If we restrict the map $ g \circ f $ to $ D $, it gives a biholomorphism of $ D $ onto $ \mathbb{H} \setminus L $, where $ L = (g \circ f) (\cup_{q = 1}^{n - 1}K_q) $. Thus, we have a biholomorphic map from $ D $ onto $ \mathbb{H} \setminus L $ which take the $ p $ to $ 0 $ and the outer boundary maps to $ \R \cup \{\infty\} $.\medskip

Thus, we have the following lemma:

\begin{lem}\label{L:LocalizationHomeomorphism}
	Let $ D $ be an $ l $-connected domain bounded by Jordan curves. Then there exist an $ l $-connected domain $ H \subset \mathbb{H} $ with outer boundary $ \R $, and a homeomorphism  $ f: \overline{D} \longrightarrow  \overline{H} \cup \{\infty \} $ such that $ f|_D: D \longrightarrow H $ a biholomorphism and $ f(p) = 0$.
\end{lem}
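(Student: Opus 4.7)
The strategy is to reduce to the case where $p$ lies on the outer boundary curve, then fill in the bounded complementary components to produce a simply connected Jordan domain, apply the Riemann mapping theorem with a Carath\'eodory boundary extension, and finally compose with a Cayley transform that sends $\D$ onto $\mathbb{H}$ and carries the image of $p$ to $0$.

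First, I would dispose of the case that $p$ lies on an inner boundary curve of $D$. Then $p \in \partial K$ for some bounded component $K$ of $D^c$; choosing any $a$ in the interior of $K$, the inversion $T(z) = 1/(z-a)$ is a biholomorphism of $\C \setminus \{a\}$ carrying $D$ onto another $l$-connected Jordan domain in which $T(p)$ lies on its outer boundary. So we may assume from the outset that $p$ belongs to the outer Jordan boundary curve of $D$. Next, let $K_1,\ldots,K_{l-1}$ denote the bounded components of $D^c$, and set $D' := D \cup K_1 \cup \cdots \cup K_{l-1}$. By construction $D'$ is a simply connected Jordan domain whose boundary is exactly the outer Jordan curve of $D$ and contains $p$. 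The Riemann mapping theorem provides a biholomorphism $\varphi: D' \to \D$, and because $\partial D'$ is a Jordan curve, Carath\'eodory's boundary extension theorem promotes $\varphi$ to a homeomorphism $\overline{D'} \to \overline{\D}$. After post-composing with a rotation of the disk, we may arrange $\varphi(p) = 1$.

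Finally, with $g(z) = i(1-z)/(1+z)$ the Cayley transform (a biholomorphism $\D \to \mathbb{H}$ whose boundary extension sends $1 \mapsto 0$ and $-1 \mapsto \infty$), set $f := g \circ \varphi$. Then $f: D' \to \mathbb{H}$ is a biholomorphism extending to a homeomorphism $\overline{D'} \to \overline{\mathbb{H}} \cup \{\infty\}$ with $f(p) = 0$ and $f$ mapping the outer boundary of $D$ onto $\R \cup \{\infty\}$. Restricting $f$ to $D$ and setting $H := f(D) = \mathbb{H} \setminus L$ with $L := f(K_1) \cup \cdots \cup f(K_{l-1})$, each $\overline{K_q}$ is a compact subset of $D'$ disjoint from $\partial D'$, so its image is a compact subset of $\mathbb{H}$ bounded by a Jordan curve. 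Hence $H$ is an $l$-connected Jordan subdomain of $\mathbb{H}$ with outer boundary $\R$, and $f$ restricts to the desired homeomorphism $\overline{D} \to \overline{H} \cup \{\infty\}$. The only non-elementary input here is the Carath\'eodory extension theorem on $D'$; once that is granted, the rest is bookkeeping with standard conformal maps and no serious obstacle arises.
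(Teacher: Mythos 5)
Your proposal is correct and follows essentially the same route as the paper: reduce to the case where $p$ lies on the outer boundary via the inversion $z \mapsto 1/(z-a)$, fill in the bounded complementary components to get a simply connected Jordan domain, apply the Riemann mapping theorem together with Carath\'eodory's extension theorem, and compose with the Cayley transform $g(z) = i(1-z)/(1+z)$ before restricting to $D$. No substantive differences to report.
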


\begin{prop}\label{T:LocalizationOfTheSpanMetric}
	Let $ D $ be an $ l $-connected domain bounded by Jordan curves and $ p \in \partial D $ a  boundary point. Then there exists a neighbourhood $ U $ of $ p $ such that $ U \cap D $ is a simply connected and 
	\[
	\lim_{z \to p}\frac{s_{U \cap D}(z)}{s_D(z)} = 1.
	\] 
\end{prop}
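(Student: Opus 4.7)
The plan is to combine the unfolding from Lemma \ref{L:LocalizationHomeomorphism} with the scaling principle of Section~\ref{S:Scaling}. By Lemma \ref{L:LocalizationHomeomorphism}, there is a biholomorphism $f : D \to H$ onto an $l$-connected domain $H \subset \mathbb{H}$ whose outer boundary is $\mathbb{R}$, extending to a homeomorphism $\overline D \to \overline H \cup \{\infty\}$ with $f(p) = 0$. Since the span metric is conformally invariant and simple-connectedness is preserved under biholomorphism, it is enough to prove the statement for $H$ at the boundary point $0$, where $\partial H$ locally coincides with a piece of $\mathbb{R}$ and so is in particular $C^2$-smooth. Choose $U$ to be an open disk around $0$ whose closure is disjoint from the compact inner holes of $H$; then $U \cap H = U \cap \mathbb{H}$ is a half-disk, hence simply connected.

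The pointwise lower bound $s_{U \cap H}(z) \geq s_H(z)$ is immediate from the extremal characterization of the span metric: every admissible $g \in \mathcal{D}_z(H)$ with $\mathcal{D}(g) \leq \pi$ restricts to an admissible function on $U \cap H$ of Dirichlet integral at most $\pi$, so the supremum over $U \cap H$ dominates the one over $H$.

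For the matching upper bound in the limit, fix a sequence $z_k \in U \cap H$ with $z_k \to 0$ and apply the scaling maps
\[
T_k(w) = \frac{w - z_k}{\operatorname{Im}(z_k)},
\]
coming from the local defining function $\psi(w) = -\operatorname{Im}(w)$ of $H$ at $0$. As described in Section~\ref{S:Scaling}, the inner holes of $H$ and the complement of $U$ sit at positive distance from $0$, so they are pushed out to infinity by $T_k$; hence $T_k(H)$ and $T_k(U \cap H)$ both converge, in the Hausdorff sense, to the same half-plane $\h = \{w : \operatorname{Im}(w) > -1\}$. By conformal invariance of the span metric,
\[
\frac{s_{U \cap H}(z_k)}{s_H(z_k)} = \frac{s_{T_k(U \cap H)}(0)}{s_{T_k(H)}(0)}.
\]

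The main obstacle is to establish continuity of the span metric at the base point $0$ under Hausdorff convergence of open sets, namely that both $s_{T_k(H)}(0)$ and $s_{T_k(U \cap H)}(0)$ converge to $s_{\h}(0)$. I expect this to follow from a Ramadanov-type stability argument for the reduced Bergman kernel: one bounds the extremal Dirichlet functions uniformly on compacta of $\h$, extracts a locally uniformly convergent subsequence using that the Dirichlet integral controls local $L^2$ bounds on derivatives (and hence local uniform bounds on the functions modulo constants), identifies the limit as the extremal function for $\h$ using Hausdorff convergence together with a lower semicontinuity argument for the Dirichlet norm, and finally passes to the limit in the extremal problem. Granting this continuity, the ratio above tends to $s_{\h}(0)/s_{\h}(0) = 1$, which combined with the lower bound yields the required limit.
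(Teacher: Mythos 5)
Your reduction to $H$ via Lemma~\ref{L:LocalizationHomeomorphism} and the monotonicity inequality $s_{U\cap H}\ge s_H$ match the paper, but the second half of your argument has a genuine gap: the claim that $s_{T_k(H)}(0)\to s_{\h}(0)$ for the scaled \emph{multiply connected} domains is exactly the hard analytic content, and you leave it as an ``expected'' Ramadanov-type stability statement rather than proving it. Worse, in the paper's architecture this convergence (equation~(\ref{E:PoitwiseConvergenceOfTheSpanMetric}) in Theorem~\ref{T:ConvergenceOfTheReducedBergmanKernel}) is \emph{deduced from} Proposition~\ref{T:LocalizationOfTheSpanMetric}: the paper only gets stability of the span metric under scaling for the simply connected pieces $D_j'$ (via the Carath\'eodory kernel convergence theorem and Riemann maps, where span $=$ hyperbolic), and then transfers it to $D_j$ using the localization you are trying to prove. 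So invoking that stability here is circular unless you supply an independent proof, and the step you gloss over --- identifying the limit of the extremal functions as the extremal function of $\h$, i.e.\ ruling out a drop in the extremal value in the limit --- is precisely where the work lies.

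The paper sidesteps all of this with an elementary squeeze that you could adopt directly (Lemma~\ref{L:LocalizationForUpperHalfPlane}): since $U\cap H\subset H\subset\mathbb{H}$ and the span metric coincides with the hyperbolic metric on simply connected domains, one has $\lambda_{\mathbb{H}}=s_{\mathbb{H}}\le s_H\le s_{U\cap H}=\lambda_{U\cap H}$, and the explicit formulas $\lambda_{\mathbb{H}}(z)=1/\mathrm{Im}(z)$ and $\lambda_{U\cap H}(z)=|r+z||r-z|/\bigl(\mathrm{Im}(z)(r^2-|z|^2)\bigr)$ for the half-disk show the ratio of the two bounding metrics tends to $1$ as $z\to 0$; no scaling or kernel stability is needed. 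Alternatively, if you want to keep your scaling picture, note that $T_k(\mathbb{H})=\h$ exactly, so $\h\supset T_k(H)\supset T_k(U\cap H)$ and monotonicity squeezes $s_{T_k(H)}(0)$ between $s_{\h}(0)$ and $s_{T_k(U\cap H)}(0)$; the latter concerns simply connected domains only, where convergence follows from the Carath\'eodory kernel convergence theorem. Either repair closes the gap; as written, the proof is incomplete.
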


To prove this proposition, we shall need the following lemma:
\begin{lem}\label{L:LocalizationForUpperHalfPlane}
	Let $ D \subset \C $ be an $ l $-connected domain which is subset of the upper half plane $ \mathbb{H} $ and the outer boundary $ \R $. Then there exists $ r>0 $ such that $ U = \{ z \in \C: \left|z \right| < r\} \cap D $ is simply connected and
	\[
	\lim_{z \to 0}\frac{s_{U \cap D}(z)}{s_D(z)} = 1
	\] 
\end{lem}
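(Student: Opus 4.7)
The plan is to combine a trivial restriction inequality with the scaling principle of Section~\ref{S:Scaling} and a Ramadanov-type stability statement for the reduced Bergman kernel.

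First, I would choose $r$ small enough that $U$ is simply connected. Since $D$ is $l$-connected with outer boundary $\R$, the complement $D^c$ consists of the closed lower half plane together with $l - 1$ bounded ``holes'' $K_1, \dots, K_{l-1}$; each $K_i$ must be a compact subset of the open upper half plane $\mathbb{H}$, since otherwise it would be attached to the lower half plane through $\R$ and merge with the unbounded component. Picking $0 < r < \min_i \operatorname{dist}(0, K_i)$ forces $U = B(0, r) \cap D$ to coincide with the open upper half-disk $B(0, r) \cap \mathbb{H}$, which is simply connected. The inequality $s_U(\zeta) \geq s_D(\zeta)$ for $\zeta \in U$ is immediate from the extremal definition: any $f \in \mathcal{D}_\zeta(D)$ with $\mathcal{D}(f) \leq \pi$ restricts to an admissible competitor on $U$ with the same derivative at $\zeta$ and no larger Dirichlet integral, so only the reverse asymptotic direction is nontrivial.

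For the matching asymptotic, I fix an arbitrary sequence $p_n \to 0$ in $U$, set $y_n = \operatorname{Im} p_n > 0$, and apply the scaling maps
\[
T_n(z) = \frac{z - p_n}{y_n}
\]
from Section~\ref{S:Scaling} corresponding to the defining function $\psi(z) = -\operatorname{Im} z$ for $\mathbb{H}$ at the point $0$. Conformal invariance of the span metric yields
\[
\frac{s_U(p_n)}{s_D(p_n)} = \frac{s_{T_n(U)}(0)}{s_{T_n(D)}(0)}.
\]
I then check that both $T_n(D)$ and $T_n(U)$ converge in the Hausdorff (hence Carath\'eodory kernel) sense to the common half plane $\h := \{w : \operatorname{Im} w > -1\}$: the real line maps to $\{\operatorname{Im} w = -1\}$, each hole $K_i$ is pushed to Euclidean distance at least $(\operatorname{dist}(0, K_i) - |p_n|)/y_n \to \infty$ from the origin, and $T_n(B(0, r))$ is a disk of radius $r/y_n \to \infty$ centred at $-p_n/y_n$, which exhausts every compact subset of $\mathbb{C}$.

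Invoking stability of the reduced Bergman kernel under Carath\'eodory kernel convergence then gives $s_{T_n(D)}(0), s_{T_n(U)}(0) \to s_{\h}(0)$, and since $\h$ is biholomorphic to $\D$ we have $s_{\h}(0) > 0$, so the ratio tends to $1$. The main obstacle I anticipate is this stability statement itself: one needs $\tilde K_{D_n}(z, z) \to \tilde K_{\h}(z, z)$ in the spirit of Ramadanov's classical theorem for the Bergman kernel, which I would derive via a normal families argument on the Dirichlet spaces $\mathcal{D}_{0}(D_n)$ combined with the extremal description of $s_D$ as a supremum over Dirichlet-bounded functions, comparing near-extremisers on a monotone exhaustion of $\h$ by domains eventually contained in the $D_n$.
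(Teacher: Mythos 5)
Your proposal is correct in outline but takes a genuinely different, and much heavier, route than the paper. The paper's own proof is an elementary squeeze: since $U \cap D \subset D \subset \mathbb{H}$, monotonicity of the span metric gives $s_{\mathbb{H}} \le s_D \le s_{U \cap D}$, and because $\mathbb{H}$ and the half-disk $U \cap D$ are simply connected, their span metrics coincide with their hyperbolic metrics, which are explicitly computable; the ratio $\lambda_{\mathbb{H}}(z)/\lambda_{U\cap D}(z) = (r^2 - |z|^2)/(|r+z||r-z|)$ tends to $1$ as $z \to 0$, and the squeeze finishes the proof. You instead scale at $0$ and invoke a Ramadanov-type stability of the span metric under Carath\'eodory kernel convergence. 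Two remarks on that. First, you must prove the stability statement from scratch: Theorem~\ref{T:ConvergenceOfTheReducedBergmanKernel}, which supplies exactly this convergence for scaled domains, is deduced in the paper from Proposition~\ref{T:LocalizationOfTheSpanMetric} and hence from this very lemma, so citing it would be circular. You appear aware of this, and your sketch does go through: since $T_n(D)$ and $T_n(U)$ are both \emph{contained} in the limit half-plane $\{\operatorname{Im} w > -1\}$, monotonicity gives $s_{T_n(D)}(0) \ge s_{\mathcal{H}}(0)$ for free, and only the reverse inequality $\limsup_n s_{T_n(D)}(0) \le s_{\mathcal{H}}(0)$ needs the normal-families argument (locally uniform bounds on the derivatives of near-extremizers, Fatou for the Dirichlet bound, and a primitive on the simply connected limit). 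Second, a small imprecision: the centre $-p_n/y_n$ of the disk $T_n(B(0,r))$ may escape to infinity when $p_n \to 0$ tangentially; the claim is rescued because the distance from the origin to its boundary is $(r - |p_n|)/y_n \to \infty$, so the disk still exhausts $\C$. In terms of what each approach buys: the paper deliberately keeps this localization lemma cheap so it can feed into the scaling machinery later, whereas your argument front-loads the scaling analysis and in effect proves a special case of Theorem~\ref{T:ConvergenceOfTheReducedBergmanKernel} inside the lemma --- more work, but it shows the statement does not depend on the explicit formula for the hyperbolic metric of a half-disk.
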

\begin{proof}
	Since $ D $ is an  $ l $-connected domain with outer boundary $ \R $, there exists $ r > 0 $ such that $ U \cap \mathbb{H} = U \cap D $ is simply connected, where $ U :=  \{ z \in \mathbb{C}: \left|z \right| < r\} $. Note that $ U \cap D = \{z \in \mathbb{H}: |z| < r \} $. Let $ \lambda_D $ be the hyperbolic metric of $ D $.
	\medskip 
	
	Using monotonicity of the hyperbolic metric and the span metric along with the fact that for a simply connected domain the span metric coincides with the hyperbolic metric, we have 
\begin{equation}\label{Eq:MonotonicitySpanHyerbolic}
	\lambda_{\mathbb{H}}(z) = s_{\mathbb{H}}(z) \leq s_{D}(z) \leq s_{U \cap D}(z) = \lambda_{U \cap D}(z)  
\end{equation}
	
	Now, we claim that
	\[
	\lim_{z \to 0}\frac{\lambda_{\mathbb{H}}(z)}{\lambda_{{U \cap D}}(z)} = 1.
	\]
	The hyperbolic metric of $ U \cap D $ is given by 
	\[
	\lambda_{{U \cap D}}(z) = \frac{|r + z||r - z|}{{\rm Im}(z)(r^2 - |z|^2)},
	\]
	and the hyperbolic metric of the half space is given by
	\[
	\lambda_{\mathbb{H}}(z) = \frac{1}{{\text{Im}}(z)},
	\]
	and by taking their ratio, we obtain
	
	\[
	\frac{\lambda_{\mathbb{H}}(z)}{\lambda_{U \cap D}(z)} = \frac{r^2 - |z|^2}{|r + z||r - z|}.
	\]
	Note that the right hand side converges to $ 1 $ as $ z \to 0 $.
	Now, using (\ref{Eq:MonotonicitySpanHyerbolic}), we have
	\[
	\frac{\lambda_{\mathbb{H}}(z)}{\lambda_{{U \cap D}}(z)} \leq \frac{s_{D}(z)}{s_{{U \cap D}}(z)} \leq 1.
	\]
	Hence
	\[
	\lim_{z \to 0}\frac{s_D(z)}{s_{U \cap D}(z)} = 1,
	\] 
	and this completes the proof.
\end{proof}

\begin{proof}[Proof of Proposition \ref{T:LocalizationOfTheSpanMetric}]
	By Lemma~\ref{L:LocalizationHomeomorphism}, there exists a homeomorphism $ f : \overline{D} \longrightarrow \overline{H} \cup \{\infty \} $ such that $ f|_D $ is a biholomorphism onto $ H $ and $ f(p) = 0 $ , where $ H $ is an $ l $-connected domain in $ \mathbb{H} $ with outer boundary $ \R $. Let $ U $ be a neighbourhood of $ p $ such that $ U \cap D $ is a simply connected (this can be done because $ D $ is an $ l $-connected domain). Further, using the continuity of $ f $ and $ f^{-1} $, we can choose the neighbourhood $ U $ such that there exist $ r_1 > r_2 > 0 $ satisfying
	\[
	U_{r_2} := \{w \in \mathbb{H}: \left| w \right| < r_2\} \subset f(U \cap D) \subset \{w \in \mathbb{H}: \left| w \right| < r_1\} =: U_{r_1} \subset f(D).  
	\] 
	Using monotonicity of the span metric
	\[
	\frac{s_{U_{r_1}}(w)}{s_{f(D)}(w)} \leq \frac{s_{f(U \cap D)}(w)}{s_{{f(D)}}(w)} \leq \frac{s_{U_{r_2}}(w)}{s_{f(D)}(w)}
	\]
	for all $ w \in U_{r_2} $.
	
	From above, using Lemma~\ref{L:LocalizationForUpperHalfPlane}, it follows that 
	\begin{equation}\label{Eq:FDratio}
	\lim_{w \to 0}\frac{s_{f(U \cap D)}(w)}{s_{{f(D)}}(w)} = 1.
	\end{equation}
	
	Now, note that the map $ f $ when restricted to $ U \cap D $ is a biholomorphism, and using the pull-back formula, we obtain
	\[
	\frac{s_{U \cap D}(z)}{s_{{D}}(z)} = \frac{s_{f(U \cap D)}(f(z))\left|f^{\prime}(z)\right|}{s_{{f(D)}}(f(z))\left|f^{\prime}(z)\right|} = \frac{s_{f(U \cap D)}(f(z))}{s_{{f(D)}}(f(z))}.
	\]
	As $ z \to p $, $ f(z) \to 0 $, therefore using (\ref{Eq:FDratio}), we obtain
	\[
	\lim_{z \to p}\frac{s_{U \cap D}(z)}{s_{{D}}(z)} = 1.
	\]  
\end{proof}
In what follows, $ D $ denotes an $ l $-connected domain bounded by Jordan curves and $ p \in \partial D $ a $ C^2 $-smooth boundary point, $ U $ a neighbourhood of $ p $ such that $ U \cap D $ a simply connected, $ D_j = T_j(D) $ the scaled domains and $ D_j' = T_j(U \cap D)$, where
\[
T_j(z) = \frac{z - p_j}{-\psi(p_j)}
\]
and $ \psi $ is a local defining function of $ \partial D $ near $ p $.
\begin{thm}\label{T:ConvergenceOfTheReducedBergmanKernel}
	Let $ \tilde{K}_j(z, \zeta) $ be the sequence of reduced Bergman kernels of the scaled domains $ D_j $ and $ \tilde{K}_{\mathcal{H}}(z, \zeta) $ be the reduced Bergman kernel of the half space $ \mathcal{H} $. Then $ \tilde{K}_j(z, \zeta) $ converges to $ \tilde{K}_{\mathcal{H}}(z, \zeta) $ uniformly on compact subsets of $ \mathcal{H} \times  \mathcal{H} $. 
\end{thm}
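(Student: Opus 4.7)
The approach is a normal families argument combined with identification of the subsequential limit via the extremal characterization of the reduced Bergman kernel, parallel to classical Ramadanov-type theorems for Bergman kernels. A convenient simplification is that the limit half-space $\h$ is simply connected, so its reduced Bergman space coincides with the ordinary Bergman space $A^2(\h)$, $\tilde K_{\h}=K_{\h}$, and $K_{\h}(\cdot,\zeta)$ is a rational function on $\C$ with a single pole of order two and zero residue at the reflection $\zeta^*$ of $\zeta$ across $\pa\h$; in particular it admits a single-valued primitive on $\C\setminus\{\zeta^*\}$.

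The first step is to show that $\tilde K_j(z,z)$ is locally uniformly bounded on $\h$. For a compact $K\subset\h$, choose a closed disk $\overline B\subset\h$ containing $K$; the Hausdorff convergence $D_j\to\h$ from Section~\ref{S:Scaling} gives $\overline B\subset D_j$ for $j$ large. Monotonicity of $\tilde K_D$ on the diagonal under domain inclusion --- immediate from the extremal characterization
\[
\tilde K_D(\zeta,\zeta)=\sup\bigl\{|f^{\prime}(\zeta)|^2/\mathcal D(f):\ f\in\mathcal D_{\zeta}(D),\ f\neq 0\bigr\}
\]
together with the contractive restriction $\mathcal D_{\zeta}(D_j)\ni f\mapsto f|_{B}\in \mathcal D_{\zeta}(B)$ --- yields $\tilde K_j(z,z)\le K_B(z,z)$ on $K$ uniformly in $j$. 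The reproducing-kernel Cauchy--Schwarz estimate $|\tilde K_j(z,\zeta)|^2\le \tilde K_j(z,z)\tilde K_j(\zeta,\zeta)$, which follows at once from the integral identity for $\tilde K_j$ recorded in the introduction, extends this to uniform bounds on compact subsets of $\h\times\h$. Montel's theorem, applied to $\tilde K_j(z,\zeta)$ viewed as a holomorphic function of $(z,\bar\zeta)$, then produces a subsequence $\tilde K_{j_k}$ converging uniformly on compacta to some $G(z,\zeta)$ holomorphic in $z$ and antiholomorphic in $\zeta$.

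To identify $G$ with $\tilde K_{\h}=K_{\h}$, by uniqueness of reproducing kernels it suffices to prove $G(\cdot,\zeta)\in A^2(\h)$ and $G(\zeta,\zeta)=K_{\h}(\zeta,\zeta)$. Fatou's lemma applied to $\|\tilde K_{j_k}(\cdot,\zeta)\|^2_{L^2(D_{j_k})}=\tilde K_{j_k}(\zeta,\zeta)$ gives both $L^2$-membership and $\|G(\cdot,\zeta)\|^2_{L^2(\h)}\le G(\zeta,\zeta)$; applying the extremal characterization of $K_{\h}$ to the locally uniform limit of the normalized extremals $\tilde K_{j_k}(\cdot,\zeta)/\sqrt{\tilde K_{j_k}(\zeta,\zeta)}$ yields $\limsup \tilde K_{j_k}(\zeta,\zeta)\le K_{\h}(\zeta,\zeta)$. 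For the matching lower bound --- the main technical step --- I would let $F$ be the single-valued primitive on $\C\setminus\{\zeta^*\}$ of $h^*(z)=K_{\h}(z,\zeta)/K_{\h}(\zeta,\zeta)$ normalized by $F(\zeta)=0$, so that $F^{\prime}(\zeta)=1$ and $\mathcal D_{\h}(F)=1/K_{\h}(\zeta,\zeta)$. Since $\zeta^*$ lies at positive distance from $\overline{\h}$, the Hausdorff convergence forces $\zeta^*\notin\overline{D_j}$ for $j$ large, so $F|_{D_j}\in\mathcal D_\zeta(D_j)$. Combining the $O(|z|^{-2})$ decay of $h^*$ at infinity (which makes the tails $\int_{|z|>R}|h^*|^2\,dA$ uniformly small in $j$) with the $C^2$-smoothness of $\pa D$ at $p$ (which makes the areas of the symmetric differences $(D_j\triangle\h)\cap B(0,R)$ tend to zero for every fixed $R$), one obtains $\mathcal D_{D_j}(F|_{D_j})\to 1/K_{\h}(\zeta,\zeta)$; the extremal characterization then delivers $\liminf \tilde K_{j_k}(\zeta,\zeta)\ge K_{\h}(\zeta,\zeta)$.

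With $G(\zeta,\zeta)=K_{\h}(\zeta,\zeta)$ in hand, the reproducing property of $K_{\h}$ applied to $G(\cdot,\zeta)\in A^2(\h)$ gives $G(\zeta,\zeta)=\int_{\h}G(z,\zeta)\overline{K_{\h}(z,\zeta)}\,dA(z)$, and the Cauchy--Schwarz bound on the right-hand side forces $\|G(\cdot,\zeta)\|^2=K_{\h}(\zeta,\zeta)$ together with the equality case $G(\cdot,\zeta)=c\,K_{\h}(\cdot,\zeta)$, with $c=1$ forced by matching values at $\zeta$. Since every subsequence of $\{\tilde K_j\}$ therefore admits a further subsequence with the same locally uniform limit $\tilde K_{\h}$, the full sequence converges uniformly on compacta of $\h\times\h$.
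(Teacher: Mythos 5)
Your proof is correct, but it takes a genuinely different route from the paper's. The paper never sets up the variational identification you carry out: it first proves the \emph{diagonal} convergence $s_{D_j}(z)=\pi\tilde K_j(z,z)\to\pi\tilde K_{\h}(z,z)$ by combining the Carath\'eodory kernel convergence theorem (applied to the Riemann maps of the simply connected localized domains $D_j'=T_j(U\cap D)$, so that $s_{D_j'}=\lambda_{D_j'}\to\lambda_{\h}=s_{\h}$) with the localization Proposition~\ref{T:LocalizationOfTheSpanMetric}, which transfers this to the full scaled domains $D_j$; it then identifies the off-diagonal limit of the normal family $\{\tilde K_j\}$ by comparing power-series coefficients along the diagonal and invoking the identity theorem. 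You bypass both the localization proposition and the Carath\'eodory kernel theorem entirely, replacing them with a Ramadanov-type argument: the upper bound $\limsup\tilde K_j(\zeta,\zeta)\le K_{\h}(\zeta,\zeta)$ from lower semicontinuity of the Dirichlet integral along normalized extremals, and the lower bound from the explicit competitor $F$, the single-valued primitive of $K_{\h}(\cdot,\zeta)/K_{\h}(\zeta,\zeta)$ restricted to $D_j$ (legitimate since the double pole at $\zeta^*$ has zero residue and $\zeta^*\notin\overline{D_j}$ eventually), whose Dirichlet integral converges because of the $O(|z|^{-4})$ decay of $|h^*|^2$ and the vanishing area of $(D_j\triangle\h)\cap B(0,R)$; the off-diagonal limit is then pinned down by the equality case of Cauchy--Schwarz against the reproducing property rather than by polarization. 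Your argument is more self-contained and more robust --- it would survive in settings where the limit domain is not identified via Riemann maps --- at the cost of the extra technical work in the competitor construction; the paper's argument is shorter here because it leans on the localization machinery it has already built and on one-variable specifics. The only stylistic caveats: the functions $\tilde K_{j_k}(\cdot,\zeta)/\sqrt{\tilde K_{j_k}(\zeta,\zeta)}$ are derivatives of the normalized extremals (one should pass to their primitives, which converge locally uniformly on the simply connected $\h$, before invoking the extremal characterization), and it is $|h^*|^2=O(|z|^{-4})$, not $h^*$ itself, whose decay controls the tails --- both are exactly as you intended and neither affects correctness.
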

\begin{proof}
	Since $ D_j' $  also converges to half space in the Carath\'eodory kernel sense, by the Carath\'eodory kernel convergence theorem  \cite{OneComplexVariableIIConway}, if $ f_j: D_j \longrightarrow \D $ and $ f: \h \longrightarrow \D $ are the Riemann maps at $ z \in \h  $ for large $ j $, then $ f_j $ converges to $ f $ uniformly on compact subsets of $ \h $ as $ j \to \infty $. Hence $ f_j^{\prime}(0) $ also converges to $ f^{\prime}(0) $ as $  j \to \infty $. 
	
	We know that 
	\[
	\lambda_{D'_j}(z) = \frac{1}{f_j^{\prime}(0)} \,\, \mbox{and} \,\, \lambda_{\h}(z) = \frac{1}{f^{\prime}(0)},
	\] 
	where $ \lambda_D $ stands for the hyperbolic metric of the domain $ D $. From this we see that the hyperbolic metrics of $ D_j' $ converges to the hyperbolic metric of $ \h $ as $ j \to \infty $, consequently, for the hyperbolic metric and the span metric of a simply connected domain coincide, we obtain $ s_{D_j'}(z) $ converges to $ s_{\h}(z) $ as $ j \to \infty $. By the pull-back formula, we have
	\[
	\frac{s_{D_j'}(z)}{s_{D_j}(z)} = \frac{ s_{ U \cap D } ( T^{-1}_j(z) ) (-\psi(p_j))}{ s_{D}(T^{-1}_j(z))(-\psi(p_j)) } = \frac{ s_{ U \cap D } ( T^{-1}_j(z) )}{ s_{D}(T^{-1}_j(z)) },
	\]
	where $ T^{-1}_j(z) = -\psi(p_j)z + p_j $, this converges to the boundary point $ p $ as $ j \to \infty $.
	By Proposition~\ref{T:LocalizationOfTheSpanMetric}, we have
	\[
	\lim_{ j \to \infty }\frac{ s_{ U \cap D } ( T^{-1}_j(z) )}{ s_{D}(T^{-1}_j(z))}= 1
	\]
	Hence 
	\begin{equation}\label{E:PoitwiseConvergenceOfTheSpanMetric}
	s_{D_j}(z) \to s_{\h}(z)
	\end{equation} 
	as $ j \to \infty $, for all $ z \in \h $ pointwise.
	We shall note this convergence for later use.  
	\medskip
	
	To show the uniform convergence of $ \tilde{K}_j(\zeta, z) $ to $ \tilde{K}_{\h}(\zeta, z) $ on compact subsets of $ \h \times \h $, as a first step, we shall show that  $ \tilde{K}_j(\zeta, z) $ is locally uniformly bounded. \medskip
	
	We choose two balls $ B(z_0, r_0) $ and $ B(\zeta_0, r_0) $ of radius $ r_0 $ with centres $ z_0, \zeta_0 \in \h $ such that their closure is contained in $ \h $. Hausdorff convergence of $ D_j $ to $ \h $ guarantees that for all large $ j $, $ B(z_0, r_0) $ and $ B(\zeta_0, r_0) $ are contained in $ D_j $.\medskip
	
	By the monotonicity of the span metric
	\[
	s_{D_j}(z) \leq s_{B(z_0,r_0)}(z) = \left(\frac{r_0}{r_0^2 - |z|^2}\right)^2 \,\,\, \text{and}\,\,\, s_{D_j}(\zeta) \leq s_{B(\zeta_0,r_0)}(z) = \left(\frac{r_0}{r_0^2 - |\zeta|^2}\right)^2
	\]
	for all $ z \in B(z_0,r_0)$, $ \zeta \in  B(\zeta_0,r_0) $, and for $j$ large. 
	For $ 0 < r < r_0 $ and $ (z, \zeta)  \in \overline{B}(z_0, r) \times \overline{B}(\zeta_0, r) $, we have
	\[
	s_{D_j}(z) \leq \left(\frac{r_0}{r_0^2 - r^2}\right)^2
	\]
	and 
	\[
	s_{D_j}(\zeta) \leq \left(\frac{r_0}{r_0^2 - r^2}\right)^2
	\] 
	for $j$ large.
	Now, to bound the reduced Bergman kernels , we use the fact that $ s_{D_j}(z) = \pi \tilde{K}_j(z, z) $, whence we get
	\[
	\tilde{K}_j(z, z) \leq \frac{1}{\pi} \left(  \frac{r_0}{r_0^2 - r^2}\right)^2
	\]
	for all $ z \in \overline{B}(z_0,r) $ and 
	\[
	\tilde{K}_j(\zeta, \zeta) \leq \frac{1}{\pi} \left( \frac{r_0}{r_0^2 - r^2}\right)^2
	\]
	for all $ \zeta \in \overline{B}(\zeta_0,r) $ and for $j$ large. Now use the  Cauchy-Schwarz inequality, to obtain
	\[
	| \tilde{K}_j(\zeta, z) |^2 \leq | \tilde{K}_j(\zeta, \zeta)| | \tilde{K}_j(z, z)|,
	\]
	and this gives the local uniform bound of the reduced Bergman kernels, i.e.,
	\[
	| \tilde{K}_j(\zeta, z) | \leq \frac{1}{\pi} \left(  \frac{r_0}{r_0^2 - r^2}\right)^2
	\]
	for all  $ (z, \zeta) \in \overline{B}(z,r) \times \overline{B}(\zeta_0,r) $, and for $ j $ large. 
	Therefore, the collection of holomorphic functions $ \{\tilde{K}_j(\zeta, z) \} $ in $ \zeta, \bar z $ variables is a normal family. 
	We use the formula $ \pi \tilde{K}_j(z, z) = s_{D_j}(z) $, and note that, by (\ref{E:PoitwiseConvergenceOfTheSpanMetric}), the sequence of span metrics $ s_{D_j} $ converges to the span metric $ s_{\h} $ of the half space as $ j \to \infty $ pointwise, to conclude that 
	\[
	\tilde{K}_j(z, z) \to \tilde{K}_{\h}(z, z)
	\] 
	as $ j \to \infty $, i.e., $ \tilde{K}_j $ converges to $ \tilde{K}_{\h} $ pointwise diagonally. From this we infer that any limit function $ K(\zeta, z) $ of $ \tilde{K}_j(\zeta, z) $ is equal to $ \tilde{K}_{\h} $ diagonally, i.e.,
	\[
	K(\zeta, z) = \tilde{K}_{\h}(z, z)
	\]
	for all $ z \in \h $.
	Now taking power series expansion of $ K $ and $ \tilde{K}_{\h} $ at $ (0, 0) $ and equating them along the diagonal, we obtain that all the corresponding coefficients of these two power series are same. This gives that any limiting function of the collection $ \tilde{K}_j $ is equal to $ \tilde{K}_{\h} $ in a neighbourhood of $ (0, 0) $, and hence, by the identity theorem, on $ \h \times \h $. Therefore, we conclude that $ \tilde{K}_j $ converges to $ \tilde{K}_{\h} $ uniformly on compact subsets of $ \h \times \h $.
\end{proof}
\medskip

\begin{proof}[Proof of Theorem~\ref{T:MainThm1}]
	Note that the affine maps $ T_j^{-1} $ are biholomorphism from $ D_j $ onto $ D $, hence the pull-back metric
	\[
	(T_j^{-1})^*(s_{D})(z) = s_{D_j}(z)
	\]
	for all $ z \in D_j$ for all $ j $. By substituting $ z = 0 $, we obtain
	\[
	s_{D}(p_j)(-\psi(p_j)) = s_{D_j}(0).
	\]
	Since $ s_{D_j}(z) = \pi \tilde{K}_{D_j}(z, z) $ and $ s_{\h}(z) = \pi \tilde{K}_{\h}(z, z) $, and in Theorem~\ref{T:ConvergenceOfTheReducedBergmanKernel}, we have seen that $ \tilde{K}_j $ converges to $ \tilde{K}_{\h} $ uniformly on compact subsets of $ \h \times \h $, therefore using the pull-back, we get
	\[
	s_{D}(p_j)(-\psi(p_j)) \to s_{\h}(0)
	\]
	as $ j \to \infty $. Since $ p_j $ is an arbitrary sequence converging to $ p $, we have
	\[
	\lim_{z \to p}s_{D}(z)(-\psi(z)) = s_{\h}(0).
	\]
	Hence 
	\[
	s_D(z)  \approx 1/{\rm dist} (z, \partial D)
	\]
	for $z$ close to $p$.
\end{proof}

\begin{prop}\label{P:ConvergenceReducedBergmanKernelScaledDomains}
	Let $ \tilde{K}_j(z, \zeta) $ be the sequence of reduced Bergman kernels of the scaled domains $ D_j $ and $ \tilde{K}_{\mathcal{H}}(z, \zeta) $ be the reduced Bergman kernel of the half space $ \mathcal{H} $. Then all the partial derivatives of $ \tilde{K}_j(z, \zeta) $ converge to the corresponding partial derivatives derivatives of $ \tilde{K}_{\mathcal{H}}(z, \zeta) $ uniformly on compact subsets of $ \mathcal{H} \times \mathcal{H} $. 
\end{prop}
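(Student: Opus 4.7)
The plan is to deduce the convergence of all partial derivatives directly from the uniform convergence of the kernels on compact sets, which is already furnished by Theorem~\ref{T:ConvergenceOfTheReducedBergmanKernel}, by invoking the several-variable Weierstrass theorem (equivalently, the Cauchy integral formula on polydiscs).

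The key observation is that each $\tilde{K}_j(z, \zeta)$ is holomorphic in $z$ and anti-holomorphic in $\zeta$, and the same is true of $\tilde{K}_{\h}$. Consequently, derivatives in $\bar z$ or in $\zeta$ vanish identically, and only the mixed derivatives $\partial^{a+b}/\partial z^{a}\partial \bar{\zeta}^{b}$ need to be considered. To treat these in a single holomorphic framework, first I would introduce the auxiliary functions $F_j(z, w) := \tilde{K}_j(z, \bar w)$ and $F(z, w) := \tilde{K}_{\h}(z, \bar w)$, both of which are jointly holomorphic in $(z, w)$ on $\h \times \h^{\ast}$, where $\h^{\ast} = \{w \in \C : \bar w \in \h\}$ is the complex conjugate of $\h$.

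By Theorem~\ref{T:ConvergenceOfTheReducedBergmanKernel}, $\tilde{K}_j \to \tilde{K}_{\h}$ uniformly on compact subsets of $\h \times \h$, which translates to $F_j \to F$ uniformly on compact subsets of $\h \times \h^{\ast}$. Given a compact set $L \subset \h \times \h^{\ast}$ and a point $(z_0, w_0) \in L$, one can choose a closed polydisc $\overline{B}(z_0, r) \times \overline{B}(w_0, r)$ contained in a slightly larger compact subset $L' \subset \h \times \h^{\ast}$, and then apply
\[
\frac{\partial^{a+b}F_j}{\partial z^{a}\partial w^{b}}(z_0, w_0) = \frac{a!\, b!}{(2\pi i)^{2}}\int_{|z - z_0| = r}\int_{|w - w_0| = r}\frac{F_j(z, w)}{(z - z_0)^{a+1}(w - w_0)^{b+1}}\, dw\, dz.
\]
Uniform convergence of $F_j$ to $F$ on $L'$ then yields uniform convergence of $\partial^{a+b}F_j/\partial z^{a}\partial w^{b}$ to $\partial^{a+b}F/\partial z^{a}\partial w^{b}$ on $L$. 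Translating back in the variable $\zeta = \bar w$ gives the asserted uniform convergence of $\partial^{a+b}\tilde{K}_j/\partial z^{a}\partial \bar{\zeta}^{b}$ on compact subsets of $\h \times \h$.

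There is no real obstacle here: the proposition is a soft consequence of Theorem~\ref{T:ConvergenceOfTheReducedBergmanKernel} combined with the standard principle that uniform convergence of holomorphic functions on compact sets upgrades to uniform convergence of all their partial derivatives on compact sets, applied in the holomorphic variables $(z, \bar{\zeta})$.
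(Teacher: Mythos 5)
Your proposal is correct and follows essentially the same route as the paper: the paper also deduces the result from Theorem~\ref{T:ConvergenceOfTheReducedBergmanKernel} by treating $\tilde{K}_j - \tilde{K}_{\h}$ as a holomorphic function of the two variables $(z,\bar\zeta)$ and applying the Cauchy inequalities on a polydisc, which is the same mechanism as your Cauchy integral formula argument. The only cosmetic difference is that you make the holomorphic reparametrization $w = \bar\zeta$ explicit, while the paper works directly with the variables $\zeta,\bar z$.
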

\begin{proof}
	Recall that the Cauchy-inequality of a holomorphic function, in particular $ \tilde{K}_j(\zeta, z) $, in two variables $ \zeta, \bar z $ is given by
	\[
	\left|\frac{\partial^{m +n}\tilde{K}_j(\zeta, z)}{\partial z^m\partial \bar{z}^n }\right| \leq
	\frac{m!n!}{4\pi^2}\sup_{(\xi_1, \xi_2) \in D^2} |\tilde{K}_j(\xi_1, \xi_2)|\frac{1}{r_1^m r_2^n},
	\]
	whenever the closure of the bidisk $ D^2 := D^2((z_0, \zeta_0); (r_1, r_2)) $ centred at $ (\zeta_0, z_0) $ of radius vector $ (r_1, r_2) $ is contained in  $D_j \times D_j$.
	Therefore, we have
	\[
	\left|\frac{\partial^{m +n}}{\partial z^m\partial \bar{z}^n }( \tilde{K}_j - \tilde{K}_{\h} )(\zeta, z)\right| 
	\leq
	\frac{m!n!}{4\pi^2}\sup_{(\xi_1, \xi_2) \in D^2} |     \tilde{K}_j(\xi_1, \xi_2) - \tilde{K}_{\h}(\xi_1, \xi_2) |\frac{1}{r_1^m r_2^n},
	\]
	since $ \tilde{K}_j - \tilde{K}_{\h} $ is a holomorphic function in two variables $ \zeta, \bar z $ in a neighbourhood of $ D^2 $ for all large $ j $. Hence the uniform convergence of $ \tilde{K}_j $ to $ \tilde{K}_{\h} $ on compact subsets of $ \h \times \h $ consequently shows that all the partial derivatives of $ \tilde{K}_j $ converges to the corresponding partial derivatives of $ \tilde{K}_{\h} $ as $ j \to \infty $.
	\medskip
\end{proof}
\begin{cor}\label{C:ConvergenceDerivativesScaledDomains}
	Let $\{D_j\}$ be the sequence of scaled domains. Then the sequence of the span metrics $s_{D_j}$ of the domains $D_j$ converges to the span metric $s_{\mathcal{H}}$ of the half-space $ \mathcal{H} $ uniformly on  compact subsets of $\mathcal{H}$. Moreover, all the partial derivatives of $s_{D_j}$ converge to the corresponding partial derivatives of $s_{\mathcal{H}}$ uniformly on  compact subsets of $\mathcal{H}$.
\end{cor}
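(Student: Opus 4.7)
The plan is to read off both claims directly from the identity $s_D(z) = \pi \tilde K_D(z,z)$ combined with Theorem~\ref{T:ConvergenceOfTheReducedBergmanKernel} and Proposition~\ref{P:ConvergenceReducedBergmanKernelScaledDomains}. For the first claim, I fix a compact set $K \subset \h$. Its diagonal image $\{(z,z) : z \in K\}$ is a compact subset of $\h \times \h$, so Theorem~\ref{T:ConvergenceOfTheReducedBergmanKernel} says $\tilde K_j \to \tilde K_{\h}$ uniformly there; multiplying by $\pi$ immediately yields $s_{D_j} \to s_{\h}$ uniformly on $K$.

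For the second claim, I would first translate the Wirtinger derivatives of the one-variable function $s_D(z) = \pi \tilde K_D(z,z)$ into two-variable partial derivatives of $\tilde K_D(z,\zeta)$ restricted to the diagonal $\zeta = z$. Since $\tilde K_D(z,\zeta)$ is holomorphic in $z$ and anti-holomorphic in $\zeta$, we may write $\tilde K_D(z,\zeta) = H(z, \bar \zeta)$ with $H$ holomorphic in both slots. Applying the chain rule with $z$ and $\bar z$ treated as independent, and using $\partial \bar z/\partial z = \partial z/\partial \bar z = 0$ to kill the cross terms, one obtains
\[
\frac{\partial^{m+n} s_D}{\partial z^m \partial \bar z^n}(z) \;=\; \pi \frac{\partial^{m+n} \tilde K_D}{\partial z^m \partial \bar \zeta^n}(z,z)
\]
for every $m, n \geq 0$. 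Proposition~\ref{P:ConvergenceReducedBergmanKernelScaledDomains} asserts that these two-variable partial derivatives of $\tilde K_j$ converge to those of $\tilde K_{\h}$ uniformly on compact subsets of $\h \times \h$; specializing to the compact diagonal $\{(z,z) : z \in K\}$ gives uniform convergence of $\partial^{m+n} s_{D_j}/\partial z^m \partial \bar z^n$ to $\partial^{m+n} s_{\h}/\partial z^m \partial \bar z^n$ on $K$. Since any real partial derivative is a linear combination of mixed Wirtinger derivatives of this type, all partial derivatives of $s_{D_j}$ converge uniformly on compact subsets of $\h$.

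There is no substantive obstacle here: Theorem~\ref{T:ConvergenceOfTheReducedBergmanKernel} and Proposition~\ref{P:ConvergenceReducedBergmanKernelScaledDomains} carry the analytic weight, and the only point that requires a moment of care is the chain-rule identity above, which is ultimately just bookkeeping reflecting the fact that $\partial/\partial z$ (respectively $\partial/\partial \bar z$) on the diagonal hits only the holomorphic (respectively anti-holomorphic) slot of $\tilde K_D$.
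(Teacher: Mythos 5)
Your proposal is correct and follows the same route as the paper: the paper's proof is a one-line appeal to the identity $s_{D_j}(z) = \pi \tilde{K}_j(z,z)$ together with Proposition~\ref{P:ConvergenceReducedBergmanKernelScaledDomains}, and you have simply made explicit the diagonal restriction and the chain-rule bookkeeping that the paper leaves implicit.
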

\begin{proof}
	As $ s_{D_j}(z) = \pi \tilde{K}_j(z, z) $, the proof of this claim follows from Proposition~\ref{P:ConvergenceReducedBergmanKernelScaledDomains}.
\end{proof}

\begin{cor}\label{C:ConvergenceHigherOrderCurvatreScaledDomains}
	Let $\{ D_j\} $ be the sequence of scaled domains.
	Suppose $ \kappa_n(z: s_{D_j}) $ be the higher-order curvatures  of the span metric of $ D_j $ and $ \kappa_n(z : s_{\mathcal{H}}) $ of the half space $ \mathcal{H} $. Then 
	\[
	\kappa_n(z: s_{D_j}) \to \kappa_n(z : s_{\mathcal{H}}) = -\left(\prod_{k = 1}^{n + 1}k!\right)^2, 
	\]
	as $ j \to \infty $.
	In particular,
	\[
	\kappa(z : s_{D_j}) \to \kappa(z : s_{\mathcal{H}}) = -4,
	\]
	as $ j \to \infty $. 
\end{cor}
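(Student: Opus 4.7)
The plan is to view $\kappa_n(z:s_D)$ as a \emph{rational} function of $s_D$ and its partial derivatives and then transfer the convergence of the building blocks, already established in Corollary~\ref{C:ConvergenceDerivativesScaledDomains}, to the full expression. Concretely, writing
\[
\kappa_n(z:s_{D_j}) = -(n+1)!\, s_{D_j}(z)^{-(n+1)(n+2)/2}\,\det\bigl[(s_{D_j})_{k\bar l}(z)\bigr]_{k,l=0}^{n},
\]
the numerator is a polynomial in the partial derivatives $\partial^{k+l}s_{D_j}/\partial z^k\partial\bar z^l$, each of which converges uniformly on compact subsets of $\mathcal{H}$ to the corresponding derivative of $s_{\mathcal{H}}$ by Corollary~\ref{C:ConvergenceDerivativesScaledDomains}. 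Hence the numerator converges to $\det[(s_{\mathcal{H}})_{k\bar l}(z)]_{k,l=0}^{n}$ locally uniformly on $\mathcal{H}$.

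For the denominator, note that $s_{\mathcal{H}}(z) = \pi\tilde{K}_{\mathcal{H}}(z,z) > 0$ for all $z\in\mathcal{H}$: indeed $\mathcal{H}$ is simply connected and biholomorphic to $\D$, on which the span metric coincides with the Poincar\'e metric and is strictly positive, and positivity is preserved by the conformal transformation rule. Therefore $s_{D_j}(z)^{-(n+1)(n+2)/2}\to s_{\mathcal{H}}(z)^{-(n+1)(n+2)/2}$ uniformly on any compact $K\subset\mathcal{H}$ (on which $s_{\mathcal{H}}$ is bounded below by a positive constant, and hence so is $s_{D_j}$ for $j$ large). Combining these two facts gives
\[
\kappa_n(z:s_{D_j})\ \longrightarrow\ \kappa_n(z:s_{\mathcal{H}})
\]
as $j\to\infty$, locally uniformly on $\mathcal{H}$, which is the convergence assertion.

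It remains to identify $\kappa_n(z:s_{\mathcal{H}})$. Since $\mathcal{H}$ is simply connected and the higher-order curvatures are conformal invariants, we may transport the computation to the unit disk via the Riemann map; alternatively, we invoke directly the equality clause in Burbea's theorem stated in the introduction, which gives
\[
\kappa_n(z:s_{\mathcal{H}}) = -\Bigl(\prod_{k=1}^{n+1}k!\Bigr)^{\!2}
\]
for every $z\in\mathcal{H}$, since $\mathcal{H}$ is simply connected (so the equality case applies). Specializing to $n=1$ recovers the Gaussian curvature $-4$ of the Poincar\'e metric, which settles the ``in particular'' assertion. I expect no real obstacle here: the only subtle point is ensuring the denominator stays uniformly bounded away from zero on compacts so that the rational expression for $\kappa_n$ passes to the limit, and this is guaranteed by the strict positivity of $s_{\mathcal{H}}$ on $\mathcal{H}$ together with the uniform convergence supplied by Corollary~\ref{C:ConvergenceDerivativesScaledDomains}.
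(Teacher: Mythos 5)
Your argument is correct and follows essentially the same route as the paper: both pass the limit through the rational expression for $\kappa_n$ by combining the locally uniform convergence of $s_{D_j}$ and all its partial derivatives (Corollary~\ref{C:ConvergenceDerivativesScaledDomains}) with the strict positivity of $s_{\mathcal{H}}$ to handle the factor $s_{D_j}^{-(n+1)(n+2)/2}$, and treat the determinant as a polynomial in the convergent derivatives. Your explicit identification of $\kappa_n(z:s_{\mathcal{H}})$ via the simply connected equality case of Burbea's theorem is a small but welcome addition that the paper leaves implicit.
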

\begin{proof}
	As we have seen, in Corollary~\ref{C:ConvergenceDerivativesScaledDomains}, that $ s_{D_j}  \to s_{\h}$ uniformly on compact subsets of $ \h $, and hence $ 1/s^{(n + 1)(n + 2)/2}_{D_j} \to 1/s^{(n + 1)(n + 2 )/2}_{\h}  $ uniformly on compact subsets of $ \h $ as $ j \to \infty $. 
	
	Now, to show that $ \kappa_n(z: s_{D_j}) \to \kappa_n(z: s_{\h}) $, the only thing remains is to show that the determinant term in the expression of  $ \kappa_n(z: s_{D_j}) $ converges to the determinant term of in the expression of $ \kappa_n(z: s_{\h}) $ uniformly on compact subsets of $ \h $ as $ j \to \infty $, but this follows from the fact that the former determinant is sum of the products of the partial derivatives of $ s_{D_j} $ with respect to $ z $ and $ \bar z $ and Corollary~\ref{C:ConvergenceDerivativesScaledDomains}.
\end{proof}
\begin{proof}[Proof of Theorem~\ref{T:MainThm2}]
	Since higher-order curvatures are conformal invariants, the proof follows from Corollary~\ref{C:ConvergenceHigherOrderCurvatreScaledDomains}.
\end{proof}

\section{Acknowledgement}
The author would like to thank Kaushal Verma for fruitful discussions.

\begin{bibdiv}
	\begin{biblist}
		
		\bib{AhlforsAndBeurling}{article}{
			author={Ahlfors, Lars},
			author={Beurling, Arne},
			title={Conformal invariants and function-theoretic null-sets},
			date={1950},
			ISSN={0001-5962},
			journal={Acta Math.},
			volume={83},
			pages={101\ndash 129},
			url={https://doi.org/10.1007/BF02392634},
			review={\MR{0036841}},
		}
		
		\bib{StefanBergmanSpanMetricCurvatureBoundary}{book}{
			author={Bergman, Stefan},
			title={The kernel function and conformal mapping},
			edition={revised},
			publisher={American Mathematical Society, Providence, R.I.},
			date={1970},
			note={Mathematical Surveys, No. V},
			review={\MR{0507701}},
		}
		
		\bib{SpanMetricTriplyConnectedDomains}{article}{
			author={Bergman, Stefan},
			author={Chalmers, Bruce},
			title={A procedure for conformal mapping of triply-connected domains},
			date={1967},
			ISSN={0025-5718},
			journal={Math. Comp.},
			volume={21},
			pages={527\ndash 542},
			url={https://doi.org/10.2307/2004998},
			review={\MR{228663}},
		}
		
		\bib{BoasHausdorff}{article}{
			author={Boas, Harold~P.},
			title={The {L}u {Q}i-{K}eng conjecture fails generically},
			date={1996},
			ISSN={0002-9939},
			journal={Proc. Amer. Math. Soc.},
			volume={124},
			number={7},
			pages={2021\ndash 2027},
			url={https://doi.org/10.1090/S0002-9939-96-03259-5},
			review={\MR{1317032}},
		}
		
		\bib{DigantaPravavKushal}{article}{
			author={Borah, Diganta},
			author={Haridas, Pranav},
			author={Verma, Kaushal},
			title={Comments on the {G}reen's function of a planar domain},
			date={2018},
			ISSN={1664-2368},
			journal={Anal. Math. Phys.},
			volume={8},
			number={3},
			pages={383\ndash 414},
			url={https://doi.org/10.1007/s13324-017-0177-5},
			review={\MR{3842204}},
		}
		
		\bib{BurbeaCurvatureLessThanMinusTwo}{article}{
			author={Burbea, Jacob},
			title={Capacities and spans on {R}iemann surfaces},
			date={1978},
			ISSN={0002-9939},
			journal={Proc. Amer. Math. Soc.},
			volume={72},
			number={2},
			pages={327\ndash 332},
			url={https://doi.org/10.2307/2042801},
			review={\MR{507333}},
		}
		
		\bib{BurbeaSpanMetricHigherOrderCurvature}{article}{
			author={Burbea, Jacob},
			title={The higher order curvatures of weighted span metrics on {R}iemann
				surfaces},
			date={1984},
			ISSN={0003-889X},
			journal={Arch. Math. (Basel)},
			volume={43},
			number={5},
			pages={473\ndash 479},
			url={https://doi.org/10.1007/BF01193857},
			review={\MR{773197}},
		}
		
		\bib{OneComplexVariableIIConway}{book}{
			author={Conway, John~B.},
			title={Functions of one complex variable. {II}},
			series={Graduate Texts in Mathematics},
			publisher={Springer-Verlag, New York},
			date={1995},
			volume={159},
			ISBN={0-387-94460-5},
			url={https://doi.org/10.1007/978-1-4612-0817-4},
			review={\MR{1344449}},
		}
		
		\bib{FalconerHausdorffMetric}{book}{
			author={Falconer, K.~J.},
			title={The geometry of fractal sets},
			series={Cambridge Tracts in Mathematics},
			publisher={Cambridge University Press, Cambridge},
			date={1986},
			volume={85},
			ISBN={0-521-25694-1; 0-521-33705-4},
			review={\MR{867284}},
		}
		
		\bib{PotentialPerspectiveShiffereGarabedian}{article}{
			author={Garabedian, P.~R.},
			author={Schiffer, M.},
			title={On existence theorems of potential theory and conformal mapping},
			date={1950},
			ISSN={0003-486X},
			journal={Ann. of Math. (2)},
			volume={52},
			pages={164\ndash 187},
			url={https://doi.org/10.2307/1969517},
			review={\MR{36316}},
		}
		
		\bib{ScalingInHigherDimensionKrantzKimGreen}{book}{
			author={Greene, Robert~E.},
			author={Kim, Kang-Tae},
			author={Krantz, Steven~G.},
			title={The geometry of complex domains},
			series={Progress in Mathematics},
			publisher={Birkh\"auser Boston, Inc., Boston, MA},
			date={2011},
			volume={291},
			ISBN={978-0-8176-4139-9},
			url={https://doi.org/10.1007/978-0-8176-4622-6},
			review={\MR{2799296}},
		}
		
		\bib{MakatoActaAnlyticDirichlet}{article}{
			author={Sakai, Makoto},
			title={Analytic functions with finite {D}irichlet integrals on {R}iemann
				surfaces},
			date={1979},
			ISSN={0001-5962},
			journal={Acta Math.},
			volume={142},
			number={3-4},
			pages={199\ndash 220},
			url={https://doi.org/10.1007/BF02395061},
			review={\MR{521461}},
		}
		
		\bib{SakaiSpanMetricCurvatureBound}{article}{
			author={Sakai, Makoto},
			title={The sub-mean-value property of subharmonic functions and its
				application to the estimation of the {G}aussian curvature of the span
				metric},
			date={1979},
			ISSN={0018-2079},
			journal={Hiroshima Math. J.},
			volume={9},
			number={3},
			pages={555\ndash 593},
			url={http://projecteuclid.org/euclid.hmj/1206134745},
			review={\MR{549663}},
		}
		
		\bib{SarioClassificationofRiemannSurfaces}{book}{
			author={Sario, L.},
			author={Nakai, M.},
			title={Classification theory of {R}iemann surfaces},
			series={Die Grundlehren der mathematischen Wissenschaften, Band 164},
			publisher={Springer-Verlag, New York-Berlin},
			date={1970},
			review={\MR{0264064}},
		}
		
		\bib{BoundaryBehConfAmarVerma}{article}{
			author={Sarkar, Amar~Deep},
			author={Verma, Kaushal},
			title={Boundary behaviour of some conformal invariants on planar
				domains},
			date={2020},
			ISSN={1617-9447},
			journal={Comput. Methods Funct. Theory},
			volume={20},
			number={1},
			pages={145\ndash 158},
			url={https://doi.org/10.1007/s40315-020-00303-2},
			review={\MR{4071867}},
		}
		
		\bib{UnifiedMetricSugawa}{inproceedings}{
			author={Sugawa, Toshiyuki},
			title={Unified approach to conformally invariant metrics on {R}iemann
				surfaces},
			date={2000},
			booktitle={Proceedings of the {S}econd {ISAAC} {C}ongress, {V}ol. 2
				({F}ukuoka, 1999)},
			series={Int. Soc. Anal. Appl. Comput.},
			volume={8},
			publisher={Kluwer Acad. Publ., Dordrecht},
			pages={1117\ndash 1127},
			url={https://doi.org/10.1007/978-1-4613-0271-1_35},
			review={\MR{1940900}},
		}
		
		\bib{SugawaMetric}{article}{
			author={Sugawa, Toshiyuki},
			title={A conformally invariant metric on {R}iemann surfaces associated
				with integrable holomorphic quadratic differentials},
			date={2010},
			ISSN={0025-5874},
			journal={Math. Z.},
			volume={266},
			number={3},
			pages={645\ndash 664},
			url={https://doi.org/10.1007/s00209-009-0590-z},
			review={\MR{2719424}},
		}
		
		\bib{SuitaConjecture}{article}{
			author={Suita, Nobuyuki},
			title={Capacities and kernels on {R}iemann surfaces},
			date={1972},
			ISSN={0003-9527},
			journal={Arch. Rational Mech. Anal.},
			volume={46},
			pages={212\ndash 217},
			url={https://doi.org/10.1007/BF00252460},
			review={\MR{367181}},
		}
		
		\bib{Zarankiewicz}{article}{
			author={Zarankiewicz, K.},
			title={{U}ber ein numerisches verfahren zur konformen abbildung zweifach
				zusammenhängender {G}ebiete},
			date={1934},
			ISSN={0044-2267},
			journal={Z. angew. Math. Mech.},
			volume={14},
			number={2},
			pages={97\ndash 104},
			url={https://doi.org/10.1002/zamm.19340140206},
		}
		
	\end{biblist}
\end{bibdiv}

\end{document}